\DeclareMathOperator{\MSpin}{\mathrm{MSpin}}
\DeclareMathOperator{\KU}{\mathrm{KU}}
\DeclareMathOperator{\KO}{\mathrm{KO}}
\DeclareMathOperator{\K}{\mathrm{K}}
\DeclareMathOperator{\RR}{\mathbb{R}}
\DeclareMathOperator{\RP}{\mathbb{RP}}
\DeclareMathOperator{\CP}{\mathbb{CP}}
\theoremstyle{definition}
\newtheorem{theorem}{Theorem}[section]
\newtheorem{corollary}[theorem]{Corollary}
\newtheorem{remark}[theorem]{Remark}
\newtheorem{definition}[theorem]{Definition}
\newtheorem{lemma}[theorem]{Lemma}
\newtheorem{proposition}[theorem]{Proposition}
\title[Nonexistence of Green functor with values $\MSpin^c$ and $\MSpin$]{On the nonexistence of a Green functor with values spin$^c$ bordism and spin bordism}
\author{Hassan H. Abdallah}
\address{Department of Mathematics, Wayne State University, Detroit, MI, USA}
\email{hassan@wayne.edu} 
\author{Zachary Halladay}
\address{Department of Mathematics, University of Illinois at Urbana-Champaign, Urbana, IL, USA}
\email{zah2@illinois.edu} 
\author{Yigal Kamel}
\address{Department of Mathematics, University of Illinois at Urbana-Champaign, Urbana, IL, USA}
\email{ykamel2@illinois.edu}
\begin{document}

\begin{abstract}
    In this note, we show that there does not exist a $C_2$-ring spectrum whose underlying ring spectrum is $\mathrm{MSpin}^c$ and whose $C_2$-fixed point spectrum is $\mathrm{MSpin}$.
\end{abstract}

\maketitle


\section{Introduction}

In \cite{ABS}, Atiyah, Bott, and Shapiro shed light on an intimate relationship between  spin bordism and topological $\K$-theory by constructing orientations,
$$
\MSpin^c \to \KU \;\;\; \text{and} \;\;\; \MSpin \to \KO,
$$

which refine the Todd genus and the $\hat{A}$-genus, respectively, to maps of ring spectra.  Atiyah showed in \cite{AtiyahKR} that the complex and real $\K$-theory spectra can be combined into a single $C_2$-ring spectrum, $\KU_{\RR}$, called Real K-theory, with 
$$
    \KU \simeq (\KU_{\RR})^{e}  \;\;\; \text{and} \;\;\; \KO \simeq (\KU_{\RR})^{C_2}. 
$$

In \cite{HK24}, the second and third authors related the two situations above by constructing a $C_2$-ring spectrum, $\MSpin^c_{\RR}$, with ring maps,
$$
\MSpin^c \simeq (\MSpin^c_{\RR})^{e} \;\;\; \text{and} \;\;\; \MSpin \to (\MSpin^c_{\RR})^{C_2},
$$ and showed that both of the Atiyah--Bott--Shapiro orientations can be recovered from a single $C_2$-ring map,
$$
\MSpin^c_{\RR} \to \KU_{\RR},
$$
by taking underlying spectra and $C_2$-fixed points, respectively. Despite the close analogy between $\MSpin^c_{\RR}$ and $\KU_{\RR}$, it was also shown in \cite{HK24} that the natural map, $$\MSpin \to (\MSpin^c_{\RR})^{C_2},$$ is not an equivalence, and that under certain assumptions there could not exist a $C_2$-ring spectrum $E$, with $E^e \simeq \MSpin^c$ and $E^{C_2} \simeq \MSpin$. The proof in \cite{HK24} uses the integrality of the $\hat{A}$-genus on spin manifolds, and thus only rules out such an $E$ whose restriction map, $\text{res} \colon \MSpin_* \cong E^{C_2}_* \to E^e_* \cong \MSpin^c_*$, preserves the $\hat{A}$-genus of underlying oriented bordism classes of manifolds. 

\vspace{3mm}

The following theorem strengthens the aforementioned nonexistence result by making use of only the algebraic structure of the abelian groups $\MSpin_*$ and the graded ring $\MSpin^c_*$, thereby removing the extra hypothesis regarding the $\hat{A}$-genus. 


\begin{theorem}\label{intro.counterspin}
    There does not exist a genuine $C_2$-ring spectrum $E$ satisfying the following two properties.
    \begin{enumerate}
        \item The underlying ring spectrum of $E$ is equivalent to $\MSpin^c$.
        \item The $C_2$-fixed point spectrum of $E$ is equivalent to $\MSpin$.
    \end{enumerate}
\end{theorem}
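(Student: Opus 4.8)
The plan is to assume that such an $E$ exists and to extract a contradiction from the graded Green functor it induces on homotopy. First I would pass to homotopy groups: the Mackey functor $\underline{\pi}_* E$ is a graded Green functor with $\underline{\pi}_* E(C_2/e) \cong \MSpin^c_*$ as a graded ring and $\underline{\pi}_* E(C_2/C_2) \cong \MSpin_*$. This records exactly the algebraic data I want: a graded ring homomorphism $\mathrm{res}\colon \MSpin_* \to \MSpin^c_*$, a transfer $\mathrm{tr}\colon \MSpin^c_* \to \MSpin_*$, and the residual Weyl-group action, which is a graded ring automorphism $\sigma$ of $\MSpin^c_*$ with $\sigma^2 = \mathrm{id}$. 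I will use two standard identities: the double coset formula $\mathrm{res}\circ\mathrm{tr} = \mathrm{id} + \sigma$, and the fact that $\sigma \circ \mathrm{res} = \mathrm{res}$ (the Weyl action is trivial on the fixed-point level), so that $\mathrm{res}$ takes values in the $\sigma$-invariants.

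Next I would rationalize everything. On the invariant subspace the operator $\mathrm{id} + \sigma$ acts as multiplication by $2$, so the double coset formula gives $w = \mathrm{res}\!\big(\tfrac{1}{2}\mathrm{tr}(w)\big)$ for every $\sigma$-invariant $w$; hence $\mathrm{res}_{\mathbb{Q}}$ surjects onto $(\MSpin^c_*\otimes\mathbb{Q})^{\sigma}$. In particular, for every degree $n$,
\[ \dim_{\mathbb{Q}}\big((\MSpin^c_n \otimes \mathbb{Q})^{\sigma}\big) \;\le\; \dim_{\mathbb{Q}}\big(\MSpin_n \otimes \mathbb{Q}\big). \]
I would then invoke the rational bordism rings: $\MSpin_* \otimes \mathbb{Q}$ is a polynomial algebra on generators in degrees $4,8,12,\dots$, so it vanishes in all degrees $\equiv 2 \pmod 4$, while $\MSpin^c_* \otimes \mathbb{Q}$ is a polynomial (hence integral) domain with $\dim_{\mathbb{Q}}(\MSpin^c_2\otimes\mathbb{Q}) = 1$ and $\dim_{\mathbb{Q}}(\MSpin^c_4\otimes\mathbb{Q}) = 2$; write $u$ for a degree-$2$ generator and $v$ for a degree-$4$ generator, so that $\{u^2, v\}$ is a basis in degree $4$.

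Finally I would run the numbers. Since $\MSpin_*\otimes\mathbb{Q}$ vanishes in degrees $2$ and $6$, the displayed inequality forces $\sigma = -\mathrm{id}$ on $\MSpin^c_2\otimes\mathbb{Q}$ and on $\MSpin^c_6\otimes\mathbb{Q}$; in particular $\sigma(u) = -u$ and $\sigma(uv) = -uv$. Combining these through the ring structure gives $-uv = \sigma(u)\sigma(v) = -u\,\sigma(v)$, and cancelling the non-zero-divisor $u$ yields $\sigma(v) = v$. As also $\sigma(u^2) = u^2$, the automorphism $\sigma$ is the identity on all of $\MSpin^c_4\otimes\mathbb{Q}$, so its invariants there are $2$-dimensional — contradicting the inequality in degree $4$, where $\dim_{\mathbb{Q}}(\MSpin_4\otimes\mathbb{Q}) = 1$. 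The step I expect to be the real obstacle is making the middle link airtight: one must confirm that the genuine ring structure genuinely supplies a graded \emph{ring} automorphism $\sigma$ satisfying the double coset formula, and one must know the rational ring $\MSpin^c_*\otimes\mathbb{Q}$ precisely enough near degrees $2$ through $6$ — in particular that the degree-$2$ generator $u$ is a non-zero-divisor whose product with the degree-$4$ generator is nonzero — because it is exactly the compatibility of $\sigma$ with this one multiplication that collapses the argument into a contradiction.
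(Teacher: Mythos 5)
Your proof is correct, and while it shares the paper's skeleton---pass to the homotopy Mackey functor of $E$, use the identity $\mathrm{res}(\mathrm{tr}(x)) = x + \sigma(x)$ together with the fact that genuineness makes the $C_2$-action $\sigma$ a ring homomorphism, and squeeze a contradiction out of degrees $2$, $4$, and $6$---the execution is genuinely different. The paper stays integral: it needs the subring of $\MSpin^c_*$ generated by $\beta = [\CP^1]$ and $Z_4 = [\CP^2]$ to be polynomial over $\mathbb{Z}$ (Bahri--Gilkey, with Anderson--Brown--Peterson ruling out torsion on these classes), deduces $\overline{\beta} = -\beta$ because $\MSpin_2 \cong \mathbb{Z}/2$ must map to zero in the torsion-free degree-$2$ group, runs a small lattice argument to see that the image of $\mathrm{res}$ in degree $4$ lies in $\mathbb{Z}\beta^2$, hence $\overline{Z_4} = n\beta^2 - Z_4$, and lands its contradiction in degree $6$, where $\beta Z_4 + \overline{\beta Z_4} = -n\beta^3 + 2\beta Z_4 \neq 0$ while $\MSpin_6 = 0$. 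You instead rationalize, identify $\mathrm{im}(\mathrm{res}_{\mathbb{Q}})$ with the $\sigma$-invariants via the double coset formula, use the $\pm 1$ eigenspace decomposition to force $\sigma = -\mathrm{id}$ in degrees $2$ and $6$, cancel the non-zero-divisor $u$ in the rational domain to get $\sigma(v) = v$, and land the contradiction in degree $4$ (invariants $2$-dimensional versus $\dim_{\mathbb{Q}}\MSpin_4 \otimes \mathbb{Q} = 1$); this is in effect a contrapositive rearrangement of the paper's degree-$6$ computation. The trade-off: your route needs only the classical rational facts ($\MSpin_*\otimes\mathbb{Q}$ polynomial on degree-$4k$ generators, $\MSpin^c_*\otimes\mathbb{Q}$ polynomial on generators in degrees $2,4,8,\dots$, hence a domain with the dimensions you quote), so it bypasses the integral Bahri--Gilkey embedding and the degree-$4$ image analysis entirely---and since rationalization is exact and preserves all the Mackey identities, it equally rules out the integral graded Green functor of the paper's Proposition 2.3. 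The paper's integral version, in exchange, exhibits explicit obstruction classes (one of $\beta + \overline{\beta}$, $Z_4 + \overline{Z_4}$, $\beta Z_4 + \overline{\beta Z_4}$ fails to be restricted). Finally, the obstacle you flag is not one: the double coset formula is axiom (4) of the paper's Definition 2.1, multiplicativity of $\sigma$ is exactly what the paper extracts from $E$ being a $C_2$-ring spectrum, and the needed rational structure of $\MSpin^c_*$ through degree $6$ is the standard Thom computation.
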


\begin{corollary}
Let $\MSpin^c_{\RR}$ be the Real spin bordism spectrum constructed in \cite{HK24}. Then, $$\MSpin \not\simeq (\MSpin^c_{\RR})^{C_2} \;\;\; \text{and} \;\;\; \MSpin \not\simeq (\MSpin^c_{\RR})^{hC_2}.$$
\end{corollary}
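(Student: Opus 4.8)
The plan is to deduce both non-equivalences directly from Theorem~\ref{intro.counterspin}. In each case the strategy is identical: exhibit a genuine $C_2$-ring spectrum whose underlying ring spectrum is equivalent to $\MSpin^c$ and whose genuine $C_2$-fixed points realize the spectrum under consideration; Theorem~\ref{intro.counterspin} then forbids that fixed-point spectrum from being $\MSpin$. For the genuine fixed points $(\MSpin^c_{\RR})^{C_2}$ the relevant $C_2$-ring spectrum is $\MSpin^c_{\RR}$ itself, while for the homotopy fixed points $(\MSpin^c_{\RR})^{hC_2}$ it is the cofree (Borel-complete) replacement of $\MSpin^c_{\RR}$.

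For the first statement, recall from \cite{HK24} that $\MSpin^c_{\RR}$ is a genuine $C_2$-ring spectrum equipped with a ring equivalence $(\MSpin^c_{\RR})^{e} \simeq \MSpin^c$ on underlying spectra, so $\MSpin^c_{\RR}$ already satisfies property (1) of Theorem~\ref{intro.counterspin}. If we also had $(\MSpin^c_{\RR})^{C_2} \simeq \MSpin$, then $E = \MSpin^c_{\RR}$ would satisfy property (2) as well, contradicting Theorem~\ref{intro.counterspin}. Hence $\MSpin \not\simeq (\MSpin^c_{\RR})^{C_2}$.

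For the second statement, I would set $E' = F(EC_{2+}, \MSpin^c_{\RR})$, the function spectrum with its residual genuine $C_2$-action, and verify three standard properties of this cofree construction. First, since $EC_{2+}$ is a cocommutative coalgebra via the diagonal, the functor $F(EC_{2+}, -)$ is lax symmetric monoidal, so $E'$ inherits the structure of a genuine $C_2$-ring spectrum. Second, by the standard model of homotopy fixed points as the genuine fixed points of the cofree spectrum, one has $(E')^{C_2} \simeq (\MSpin^c_{\RR})^{hC_2}$. Third, because $EC_2$ is nonequivariantly contractible, restriction to the trivial subgroup gives $(EC_{2+})^{e} \simeq S^0$, so the collapse map $EC_{2+} \to S^0$ induces a ring equivalence $(E')^{e} \simeq F(S^0, \MSpin^c) \simeq \MSpin^c$ on underlying spectra. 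Thus $E'$ satisfies property (1) of Theorem~\ref{intro.counterspin}, and if $(\MSpin^c_{\RR})^{hC_2} \simeq \MSpin$ it would satisfy property (2), again a contradiction; hence $\MSpin \not\simeq (\MSpin^c_{\RR})^{hC_2}$.

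The only real work lies in the three structural facts about the cofree construction invoked above, namely its lax symmetric monoidality, the identification of its genuine fixed points with homotopy fixed points, and the preservation of the underlying ring spectrum. These are the step I expect to require the most care, since one must ensure the ring structure transported onto $\MSpin^c$ through $(E')^{e}$ agrees with the original; this follows from the monoidality of restriction together with the fact that $EC_{2+} \to S^0$ is a map of coalgebras. Once these facts are recorded, both non-equivalences are formal consequences of Theorem~\ref{intro.counterspin}, with all bordism-theoretic and $\hat{A}$-genus input already absorbed into that theorem.
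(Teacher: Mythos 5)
Your proposal is correct and is precisely the argument the paper intends: the corollary is stated without proof as an immediate consequence of Theorem~\ref{intro.counterspin}, applied to $E = \MSpin^c_{\RR}$ for the genuine fixed points and to the cofree replacement $F(EC_{2+}, \MSpin^c_{\RR})$ for the homotopy fixed points. Your verification of the three standard properties of the cofree construction (lax symmetric monoidality, genuine fixed points computing homotopy fixed points, and preservation of the underlying ring spectrum via the coalgebra map $EC_{2+} \to S^0$) correctly fills in the details the authors left implicit.
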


\begin{corollary}
The Atiyah--Bott--Shapiro orientations, 
$\MSpin^c \to \KU$ and $\MSpin \to \KO$, 
do not arise as the $\{e\}$- and $C_2$-fixed point maps, respectively, of any map of $C_2$-ring spectra, $E \to \KU_{\RR}$.
\end{corollary}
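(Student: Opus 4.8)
The plan is to deduce this corollary formally from Theorem~\ref{intro.counterspin} by contradiction, so that essentially no new argument beyond unwinding the hypothesis is required. Suppose, toward a contradiction, that there is a map of (genuine) $C_2$-ring spectra $f \colon E \to \KU_{\RR}$ whose $\{e\}$-fixed point map $f^e \colon E^e \to (\KU_{\RR})^e$ recovers the Atiyah--Bott--Shapiro orientation $\MSpin^c \to \KU$, and whose $C_2$-fixed point map $f^{C_2} \colon E^{C_2} \to (\KU_{\RR})^{C_2}$ recovers the orientation $\MSpin \to \KO$. First I would record the two facts that make these statements meaningful: that $\KU_{\RR}$ is a genuine $C_2$-ring spectrum with $(\KU_{\RR})^e \simeq \KU$ and $(\KU_{\RR})^{C_2} \simeq \KO$, as recalled from \cite{AtiyahKR} in the introduction, and that $E$, being the source of a map of genuine $C_2$-ring spectra, is itself a genuine $C_2$-ring spectrum.

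The crux of the deduction is the observation that asserting $f^e$ \emph{is} the orientation $\MSpin^c \to \KU$ includes the data of an equivalence of its source with the source of that orientation. Thus the hypothesis forces a ring-spectrum equivalence $E^e \simeq \MSpin^c$, which is exactly condition~(1) of Theorem~\ref{intro.counterspin}. Symmetrically, the identification of $f^{C_2}$ with the orientation $\MSpin \to \KO$ forces a ring-spectrum equivalence $E^{C_2} \simeq \MSpin$, which is condition~(2).

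With both conditions in hand, $E$ is a genuine $C_2$-ring spectrum whose underlying ring spectrum is $\MSpin^c$ and whose $C_2$-fixed point spectrum is $\MSpin$, directly contradicting Theorem~\ref{intro.counterspin}; hence no such $f$ exists. The only point requiring care---and the nearest thing to an obstacle in an otherwise immediate argument---is the bookkeeping in the previous paragraph: one must ensure that ``arising as the fixed-point map of $f$'' genuinely supplies the equivalences $E^e \simeq \MSpin^c$ and $E^{C_2} \simeq \MSpin$ \emph{as equivalences of ring spectra}, rather than merely producing maps into $\KU$ and $\KO$ whose targets happen to have the right homotopy type. Once these source identifications are extracted, the conclusion follows at once from the theorem.
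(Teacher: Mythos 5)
Your proposal is correct and matches the paper's (implicit) reasoning: the paper leaves this corollary unproved precisely because it follows immediately from Theorem~\ref{intro.counterspin} by the contradiction you describe, with the hypothesis supplying the ring-spectrum identifications $E^e \simeq \MSpin^c$ and $E^{C_2} \simeq \MSpin$. Your care in noting that these identifications must be extracted as equivalences of ring spectra is a reasonable bit of bookkeeping the paper takes for granted.
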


\section{Proof of Theorem \ref{intro.counterspin}}

The main piece of technology that our proof makes use of is the fact that the homotopy groups of a genuine $C_2$-spectrum carry the extra structure of a Mackey functor.

\begin{definition}\label{Mackey}
    A $C_2$-\textit{Mackey Functor}, $M$, consists of Abelian groups $M^{C_2}$ and $M^e$, together with group homomorphisms,
    \vspace{-5mm}
    \begin{multicols}{2}
\[
  \begin{tikzcd}
        M^{C_2} \arrow[bend right=35,swap]{d}{\mathrm{res}} \\
        M^e \arrow[bend right=35,swap]{u}{\mathrm{tr}} \arrow[out=240,in=300,loop,swap, "\overline{(\;\;)}"]
    \end{tikzcd}
\]
  \break
  satisfying:
    \begin{enumerate}
        \item $\overline{\mathrm{res}(y)} = \mathrm{res}(y)$;
        \item $\mathrm{tr}(\overline{x}) =\mathrm{tr}(x)$;
        \item $\overline{\overline{x}} = x$;
        \item $\mathrm{res}(\mathrm{tr}(x)) = x + \overline{x}$.
    \end{enumerate}
\end{multicols}
\end{definition}

We now recall the algebraic facts about spin$^c$ and spin bordism that we will use. The following lemma collects the information we need from the ring structure of $\MSpin^c_*$.

\begin{lemma}
    There are elements $\beta \in \MSpin^c_2$ and $Z_4 \in \MSpin^c_4$, such that the subring of $\MSpin^c_*$ generated by $\beta$ and $Z_4$ is isomorphic to $\mathbb{Z}[\beta, Z_4]$.
\end{lemma}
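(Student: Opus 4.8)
The plan is to choose explicit complex manifolds for $\beta$ and $Z_4$ and to reduce the statement to algebraic independence after rationalization, where the ring structure of $\MSpin^c_*$ becomes transparent.

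First I would set $\beta = [\CP^1]$ and $Z_4 = [\CP^2]$, each given the spin$^c$ structure underlying its complex structure; in particular $\CP^2$ is not spin but is spin$^c$, so this is a legitimate choice, and $\beta, Z_4$ have the required degrees $2$ and $4$. There is then a graded ring homomorphism $\Phi \colon \mathbb{Z}[\beta, Z_4] \to \MSpin^c_*$ sending the two polynomial generators to these classes, and its image is exactly the subring they generate; the content of the lemma is that $\Phi$ is injective.

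To prove injectivity I would rationalize. I would use the standard fact that $\MSpin^c_* \otimes \mathbb{Q}$ is a polynomial $\mathbb{Q}$-algebra, a consequence of the Thom isomorphism together with $H^*(B\mathrm{Spin}^c; \mathbb{Q}) \cong \mathbb{Q}[c_1, p_1, p_2, \dots]$. Since the degree-$2$ part has rank one, the nonzero class $\beta$ is, up to a scalar, a polynomial generator $x_2$; since the degree-$4$ part has rank two and contains the decomposable $\beta^2 = x_2^2$, there is a second polynomial generator $x_4$ in degree $4$. To see that $Z_4$ genuinely involves $x_4$ rather than lying in $\mathbb{Q}\,\beta^2$, I would compute the signature, a bordism invariant with $\sigma(\CP^2) = 1$ and $\sigma(\CP^1 \times \CP^1) = 0$; as $\beta^2 = [\CP^1 \times \CP^1]$, the signature separates $Z_4$ from $\mathbb{Q}\,\beta^2$, so $Z_4 = c\,x_4 + d\,x_2^2$ with $c \neq 0$.

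Finally I would upgrade this to full algebraic independence. Replacing $x_4$ by $c\,x_4 + d\,x_2^2$ is a triangular change of variables, so $\{x_2,\, c x_4 + d x_2^2,\, \dots\}$ is again a system of polynomial generators; hence $\beta$ and $Z_4$ are algebraically independent in $\MSpin^c_* \otimes \mathbb{Q}$, and the monomials $\beta^a Z_4^b$ are linearly independent in every degree. Therefore the composite $\mathbb{Z}[\beta, Z_4] \xrightarrow{\Phi} \MSpin^c_* \to \MSpin^c_* \otimes \mathbb{Q}$ is injective, which forces $\Phi$ itself to be injective. The step I expect to be the main obstacle is not the formal argument but correctly invoking the rational polynomial structure and ensuring the chosen degree-$4$ class is indecomposable; the signature computation is what makes the latter concrete, and the triangularity of the change of variables is what promotes the single degree-$4$ check to algebraic independence in all degrees.
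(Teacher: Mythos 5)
Your argument is correct, but it takes a genuinely different route from the paper's. The paper makes the same choice of classes, $\beta = [\CP^1]$ and $Z_4 = [\CP^2]$ (after observing that the class $Z_4$ of \cite{AS24}, defined as a lift of $[\RP^2]^2 \in \mathrm{MO}_4$, may be so chosen), but it obtains injectivity of $\mathbb{Z}[\beta, Z_4] \to \MSpin^c_*$ essentially by citation: Theorem 1.2 of \cite{BG87} supplies the embedding $\mathbb{Z}[\CP^1, \CP^2] \hookrightarrow \MSpin^c_*$ outright, with the Anderson--Brown--Peterson identification of the torsion \cite{ABP} quoted along the way, and with the mod $2$ structure from \cite{AS24} recorded in passing. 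You instead prove the embedding by hand: rationally $\MSpin^c_* \otimes \mathbb{Q} \cong H_*(B\mathrm{Spin}^c;\mathbb{Q})$ is polynomial with a single generator in degree $2$ and a second generator in degree $4$ (dual to $c_1$ and $p_1$); the signature, which factors through $\Omega^{SO}_*$, separates $Z_4$ from $\mathbb{Q}\beta^2$ because $\sigma(\CP^2) = 1$ while $\sigma(\beta^2) = \sigma(\CP^1 \times \CP^1) = 0$; and the triangular change of variables promotes this to $\beta, Z_4$ being part of a system of polynomial generators, hence algebraically independent, so that injectivity of $\Phi$ follows from torsion-freeness of $\mathbb{Z}[\beta, Z_4]$. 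All of these steps are sound, and what your approach buys is self-containedness: for the lemma as stated, no appeal to \cite{BG87} is needed. What the paper's route buys is the additional low-degree information (e.g., $\MSpin^c_*/(2) \cong \mathbb{Z}/2[\beta, Z_4]$ through degree $7$, and control of the torsion) that silently supports the degree-by-degree bookkeeping in the proof of Proposition~\ref{main.result}; your proof establishes the lemma itself but not those auxiliary facts.

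One small step needs patching. You call $\beta$ ``the nonzero class'' in $\MSpin^c_2 \otimes \mathbb{Q}$ without justification, and neither of the invariants you deploy can supply it: the signature vanishes in degree $2$, and the forgetful map lands in $\Omega^{SO}_2 = 0$. The quickest fix is the Todd genus, $\mathrm{Td}(\CP^1) = 1$, via the Atiyah--Bott--Shapiro orientation (equivalently, the spin$^c$ characteristic number $\langle c_1, [\CP^1] \rangle = 2$, where $c_1$ is the first Chern class of the determinant line bundle of the spin$^c$ structure); either is a one-line addition that detects $\beta$ rationally and completes your identification of $\beta$ as a scalar multiple of the degree-$2$ polynomial generator.
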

\begin{proof}
        We recall from \cite{AS24} that there are elements $\beta = [\CP^1] \in \MSpin^c_2$ and $Z_4 \in \MSpin^c_4$ such that the subring of $\MSpin^c_*/(2)$ generated by $\beta$ and $Z_4$ is isomorphic to $\mathbb{Z}/2[\beta, Z_4]$. Furthermore, $\MSpin^c_{*}/(2)$ is isomorphic to $\mathbb{Z}/2[\beta, Z_4]$ through degree 7. Anderson--Brown--Peterson proved in \cite{ABP} that the 2-torsion (and hence all torsion) in $\MSpin^c_*$ coincides with the $\beta$-torsion, so neither $\beta$ nor $Z_4$ are 2-torsion in $\MSpin^c_{*}$. Thus, to give a presentation of the subring of $\MSpin^c_{*}$ generated by $\beta$ and $Z_4$, we need only determine if there are any relations between $\beta$ and $Z_4$. Since $Z_4$ is defined in \cite{AS24} as a lift of $[\RP^2]^2 \in \mathrm{MO}_4$ to $\MSpin^c_4$, we see that we can choose $Z_4 = [\CP^2]$. Then since $\mathbb{Z}[\mathbb{CP}^1,\mathbb{CP}^2]$ embeds into $\MSpin^{c}_{*}$ (see theorem 1.2 of \cite{BG87}) the subring of $\MSpin^c_{*}$ generated by $\beta$ and $Z_4$ is isomorphic to $\mathbb{Z}[\beta,Z_4]$.  
    \vspace{3mm}
\end{proof}

For $\MSpin_*$, the only algebraic facts we will use are:
$$
\MSpin_2 \cong \mathbb{Z}/2, \;\;\; \MSpin_4 \cong \mathbb{Z}, \;\; \text{and} \;\; \MSpin_6 = 0.
$$

\begin{proposition}\label{main.result}
There does not exist a $\mathbb{Z}$-graded $C_2$-Mackey functor,
$$
\begin{tikzcd}
        \MSpin_*  \arrow[r, bend left = 15, "\text{res}"] & \MSpin^c_* \arrow[l, bend left = 15, "\text{tr}"] \arrow[loop right, "\overline{(\:  \:)}"]
\end{tikzcd},
$$
such that $\overline{(\:\:)}: \MSpin^c_* \to \MSpin^c_*$ is a ring homomorphism.
\end{proposition}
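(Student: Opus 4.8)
The plan is to extract a contradiction purely from the Mackey-functor axioms together with the multiplicativity of the conjugation $\overline{(\ )}$, working in the low degrees $2$, $4$, and $6$ where we understand both $\MSpin_*$ and the subring $\mathbb{Z}[\beta,Z_4]\subseteq \MSpin^c_*$. First I would pin down the conjugation on $\beta$. Since $\MSpin^c_2$ is torsion-free (its mod-$2$ reduction is $\mathbb{Z}/2$ while $\beta$ already generates a free summand, and all torsion is $\beta$-torsion), the restriction $\text{res}\colon \MSpin_2\cong \mathbb{Z}/2 \to \MSpin^c_2$ must vanish. Axiom (4) then reads $x+\overline{x}=\text{res}(\text{tr}(x))=0$ for every $x\in\MSpin^c_2$; in particular $\overline{\beta}=-\beta$. (Equivalently, $\overline{\beta}=+\beta$ is impossible, as it would force $2\beta=\text{res}(\text{tr}(\beta))\in\text{im}(\text{res})=0$, contradicting that $\beta$ is non-torsion.)

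Next I would exploit $\MSpin_6=0$. Here $\text{tr}\colon\MSpin^c_6\to\MSpin_6$ is the zero map, so axiom (4) gives $x+\overline{x}=0$ for all $x\in\MSpin^c_6$. Applying this to $x=\beta Z_4$ and using that $\overline{(\ )}$ is a ring homomorphism with $\overline{\beta}=-\beta$ yields
\[
-\beta Z_4 = \overline{\beta Z_4} = \overline{\beta}\,\overline{Z_4} = -\beta\,\overline{Z_4},
\]
hence $\beta(\overline{Z_4}-Z_4)=0$ in $\MSpin^c_6$. Thus $\overline{Z_4}-Z_4$ is $\beta$-torsion, and since the $\beta$-torsion coincides with the full torsion subgroup of $\MSpin^c_*$, the difference $\overline{Z_4}-Z_4$ is a torsion element of $\MSpin^c_4$. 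In other words, $\overline{Z_4}\equiv Z_4$ modulo torsion.

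Finally I would derive the contradiction in degree $4$ via axiom (4) and the cyclicity of $\text{im}(\text{res})$. Since $\overline{\beta}=-\beta$ gives $\overline{\beta^2}=\beta^2$, taking $x=\beta^2$ shows $2\beta^2=\beta^2+\overline{\beta^2}\in\text{im}(\text{res})$, while taking $x=Z_4$ shows $Z_4+\overline{Z_4}\in\text{im}(\text{res})$. Now $\text{res}\colon\MSpin_4\cong\mathbb{Z}\to\MSpin^c_4$ has cyclic image, so that image stays cyclic, of rank at most $1$, after passing to the torsion-free quotient $F:=\MSpin^c_4/\text{tors}$. But in $F$ the classes $[\beta^2]$ and $[Z_4]$ are linearly independent (the embedding $\mathbb{Z}[\beta,Z_4]\hookrightarrow\MSpin^c_*$ guarantees $\beta^2,Z_4$ satisfy no nontrivial relation, and this survives modulo torsion), and by the previous paragraph $[Z_4+\overline{Z_4}]=2[Z_4]$. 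Thus $2[\beta^2]$ and $2[Z_4]$ are two independent elements lying in a subgroup of rank $\le 1$ of $F$, which is absurd.

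The main obstacle I anticipate is the middle step: one must connect the a priori unknown conjugation action on $Z_4$ to something rigid. The key realization is that $\MSpin_6=0$ forces $\overline{(\ )}$ to act by $-1$ on all of $\MSpin^c_6$, so that multiplicativity transports this constraint back down to $Z_4$ up to $\beta$-torsion. A secondary subtlety is that we do not know whether $\beta^2$ and $Z_4$ generate $\MSpin^c_4$ as a group; the remedy is to argue in the torsion-free quotient $F$, where the embedding of $\mathbb{Z}[\beta,Z_4]$ still supplies linear independence, so the rank obstruction to $\text{im}(\text{res})$ being cyclic goes through cleanly.
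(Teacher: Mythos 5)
Your proposal is correct, and while it runs on the same ingredients as the paper's proof (the degrees $2$, $4$, $6$; the relation $\overline{\beta}=-\beta$ forced by $\MSpin_2\cong\mathbb{Z}/2$; multiplicativity applied to $\beta Z_4$; and the embedding $\mathbb{Z}[\beta,Z_4]\hookrightarrow\MSpin^c_*$), it routes the contradiction in the opposite direction. The paper works forward: it notes the image of $\mathrm{res}$ in degree $4$ is cyclic and can only be $\mathbb{Z}\beta^2$ or $2\mathbb{Z}\beta^2$, splits into cases, writes $Z_4+\overline{Z_4}=n\beta^2$ in the surviving case, and lands the contradiction in degree $6$, where $\beta Z_4+\overline{\beta Z_4}=-n\beta^3+2\beta Z_4\neq 0$ must nonetheless lie in $\mathrm{im}(\mathrm{res})=0$. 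You work backward: $\MSpin_6=0$ forces $\overline{(\;)}=-\mathrm{id}$ on all of $\MSpin^c_6$, whence $\beta(\overline{Z_4}-Z_4)=0$, and the Anderson--Brown--Peterson identification of $\beta$-torsion with torsion pins down $\overline{Z_4}\equiv Z_4$ modulo torsion; the contradiction then lands in degree $4$, where $2[\beta^2]$ and $2[Z_4]$ are linearly independent in $\MSpin^c_4/\mathrm{tors}$ yet both lie in the (rank at most one) image of $\mathrm{res}$. Your version buys two things: it avoids the case split entirely, and the rank argument in the torsion-free quotient sidesteps the paper's implicit use of the structure of $\MSpin^c_4$ --- the assertion that the image of $\mathrm{res}$ must be $\mathbb{Z}\beta^2$ or $2\mathbb{Z}\beta^2$ tacitly uses that $2\beta^2$ has essentially no divisors in $\MSpin^c_4$ beyond $\pm\beta^2,\pm 2\beta^2$, which your argument never needs. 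The cost is a second invocation of the ABP torsion fact inside the proof proper, where the paper needs it only in its lemma. One small point worth tightening: your parenthetical argument that $\MSpin^c_2$ is torsion-free should note that any torsion there is finitely generated $2$-torsion, so a nonzero torsion subgroup would enlarge the mod-$2$ reduction beyond the single $\mathbb{Z}/2$ generated by $\beta$; this matches the paper's own terseness at that step, so it is a polish, not a gap.
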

\begin{proof}
    Suppose that a Mackey functor as in Proposition \ref{main.result} exists. By property (4) of Definition \ref{Mackey}, given any class $x \in \MSpin^c_*$, the equation,
    \[
    x + \overline{x} = \text{res}(\text{tr}(x)),
    \]
    must hold. Thus, it is sufficient to show there is some $x \in \MSpin^c_*$ for which $x + \overline{x}$ cannot be in the image of $\text{res}$. We will do this by showing that such a class must exist in at least one of the degrees $2$, $4$, or $6$. 

    \vspace{3mm}

    Since $\MSpin_2 \cong \mathbb{Z}/2$, the image of res in degree 2 must be $0$, and thus we must have that $\beta + \overline{\beta} = 0$. This gives
    \[
    \overline{\beta} = -\beta.
    \]
    Now we consider degree $4$. Using multiplicativity of the $C_2$-action, we know that $\overline{\beta^2} = \beta^2$, and so $\beta^2 + \overline{\beta^2} = 2\beta^2$. Since $\MSpin_4 \cong \mathbb{Z}$, we know that the image of res in degree 4 is either $\mathbb{Z}{\beta^2}$ or $2\mathbb{Z}{\beta^2}$. From this we know that either $Z_{4} + \overline{Z_{4}}$ is not in the image of res, or 
    \[
    Z_{4} + \overline{Z_{4}} = n\beta^2,
    \]
    for some integer $n$. We will proceed by assuming the latter and move on to considering degree $6$. Thus, we may assume
    \[
    \overline{Z_{4}} = n\beta^2 - Z_{4}.
    \]
    Then in degree $6$, we have
    \[
    \overline{\beta Z_{4}} = (\overline{\beta}) (\overline{Z_{4}}) = (-\beta)(n\beta^2 - Z_{4}) = -n\beta^3 +\beta Z_{4}.
    \]
    However,
    \[
    \beta Z_{4} +\overline{\beta Z_{4}} = -n\beta^3 + 2 \beta Z_{4} \neq 0,
    \]
    and thus $\beta Z_{4} +\overline{\beta Z_{4}}$ cannot be in the image of res, since $\MSpin_6 = 0$.
    Thus, one of the elements $\beta + \overline{\beta}$, $Z_{4} + \overline{Z_{4}}$, or $\beta Z_{4} +\overline{\beta Z_{4}}$ must not be in the image of res, and therefore such a Mackey functor cannot exist.
\end{proof}

\begin{proof}[Proof of Theorem \ref{intro.counterspin}]
    Suppose $E$ is a genuine $C_2$-ring spectrum satisfying properties (1)-(2) of Theorem \ref{intro.counterspin}. Then the homotopy Mackey functor of $E$ must take the form, 
    $$
\begin{tikzcd}
        \MSpin_*  \arrow[r, bend left = 15, "\text{res}"] & \MSpin^c_* \arrow[l, bend left = 15, "\text{tr}"] \arrow[loop right, "\overline{(\:  \:)}"]
\end{tikzcd}.
$$
Since $E$ is a $C_2$-ring spectrum, the map $\overline{(\:\:)}$ is a ring homomorphism. By Proposition \ref{main.result}, such a Mackey functor does not exist. 
\end{proof}

\section{Some remarks}

\begin{remark}
The condition in Proposition \ref{main.result} that $\overline{(\:\:)} : \MSpin^c_* \to \MSpin^c_*$ must be a ring homomorphism can evidently be slightly weakened to only requiring the ring structure to be respected through degree 6. However, some condition of this type is necessary, since for any two abelian groups $A$ and $B$, there exists a  $C_2$-Mackey functor $M$ with $M^e = A$ and $M^{C_2}=B$ by setting $\mathrm{res}=0$, $\mathrm{tr}=0$, and  $\overline{x} = -x$. 
\end{remark}

\begin{remark}
A \textit{graded $C_2$-Green functor} is a monoid object in the category of graded $C_2$-Mackey functors under the graded ``box product''. In particular, a graded Green functor, $R_*$, is a graded Mackey functor whose values are graded rings, such that the structure maps, $\mathrm{res}, 
\mathrm{tr}$, and $\overline{(\:\:)}$, satisfy certain extra properties involving the ring structures of $R_*^{C_2}$ and $R_*^{e}$. Among these is the condition that the $C_2$-action $\overline{(\:\:)} : R_*^e \to R_*^e$ is a ring homomorphism. Thus, Proposition \ref{main.result} directly implies that there does not exist a graded Green functor with values $\MSpin_*^c$ and $\MSpin_*$, which is the fact responsible for the title of this paper. Note that Theorem \ref{intro.counterspin} follows from this  statement as well.
\end{remark}

\bibliographystyle{plain} 
\bibliography{main}

\end{document}